\newcommand*{\Relbarfill@}{\arrowfill@\Relbar\Relbar\Relbar}
\newcommand*{\xeq}[2][]{\ext@arrow 0055\Relbarfill@{#1}{#2}}
\LetLtxMacro{\oldsqrt}{\sqrt}
\renewcommand{\sqrt}[2][]{\,\oldsqrt[#1]{#2}\,}
\def\@tocline#1#2#3#4#5#6#7{\relax
  \ifnum #1>\c@tocdepth % then omit
  \else
    \par \addpenalty\@secpenalty\addvspace{#2}%
    \begingroup \hyphenpenalty\@M
    \@ifempty{#4}{%
      \@tempdima\csname r@tocindent\number#1\endcsname\relax
    }{%
      \@tempdima#4\relax
    }%
    \parindent\z@ \leftskip#3\relax \advance\leftskip\@tempdima\relax
    \rightskip\@pnumwidth plus4em \parfillskip-\@pnumwidth
    #5\leavevmode\hskip-\@tempdima
      \ifcase #1
       \or\or \hskip 1em \or \hskip 2em \else \hskip 3em \fi%
      #6\nobreak\relax
    \dotfill\hbox to\@pnumwidth{\@tocpagenum{#7}}\par
    \nobreak
    \endgroup
  \fi}
\newcommand{\bsh}{\backslash}
\def\greekbolds#1{%
 \@for\next:=#1\do{%
    \def\X##1;{%
     \expandafter\def\csname V##1\endcsname{\boldsymbol{\csname##1\endcsname}}
     }
   \expandafter\X\next;
  }
}
\def\make@bb#1{\expandafter\def
  \csname bb#1\endcsname{{\mathbb{#1}}}\ignorespaces}
\def\make@bbm#1{\expandafter\def
  \csname bb#1\endcsname{{\mathbbm{#1}}}\ignorespaces}
\def\make@bf#1{\expandafter\def\csname bf#1\endcsname{{\bf
      #1}}\ignorespaces} 
\def\make@gr#1{\expandafter\def
  \csname gr#1\endcsname{{\mathfrak{#1}}}\ignorespaces}
\def\make@scr#1{\expandafter\def
  \csname scr#1\endcsname{{\mathscr{#1}}}\ignorespaces}
\def\make@cal#1{\expandafter\def\csname cal#1\endcsname{{\mathcal
      #1}}\ignorespaces} 
\def\do@Letters#1{#1A #1B #1C #1D #1E #1F #1G #1H #1I #1J #1K #1L #1M
                 #1N #1O #1P #1Q #1R #1S #1T #1U #1V #1W #1X #1Y #1Z}
\def\do@letters#1{#1a #1b #1c #1d #1e #1f #1g #1h #1i #1j #1k #1l #1m
                 #1n #1o #1p #1q #1r #1s #1t #1u #1v #1w #1x #1y #1z}
\newcommand{\sel}{\mathrm{sel}}
\newcommand{\non}{\mathrm{non}}
\newcommand{\abs}[1]{\lvert #1 \rvert}
\newcommand{\zmod}[1]{\mathbb{Z}/ #1 \mathbb{Z}}
\newcommand{\wh}{\widehat}
\newcommand{\scc}{\mathrm{sc}}
\DeclareMathOperator{\Mass}{Mass}
\DeclareMathSymbol{\twoheadrightarrow} {\mathrel}{AMSa}{"10}
\DeclareMathOperator{\Hom}{Hom}
\DeclareMathOperator{\Gal}{Gal}
\DeclareMathOperator{\Nr}{Nr}
\DeclareMathOperator{\Cl}{Cl}
\DeclareMathOperator{\Emb}{Emb}
\DeclareMathOperator{\Frac}{Frac}
\newcommand{\Z}{\mathbb Z}
\newcommand{\Q}{\mathbb Q}
\newcommand{\wcO}{\widehat{\mathcal{O}}}
\newcommand{\whD}{\widehat{D}}
\newcommand{\whF}{\widehat{F}}
\newcommand{\whO}{\widehat{O}}
\newcounter{thmcounter} 
\numberwithin{thmcounter}{section}  % place this command in different
\newtheorem{thm}[thmcounter]{Theorem}
\newtheorem{lem}[thmcounter]{Lemma}
\newtheorem{prop}[thmcounter]{Proposition}
\theoremstyle{definition}
\numberwithin{equation}{section}
\numberwithin{figure}{section}
\numberwithin{table}{section}
\newtheoremstyle{notitle}  % this product a paragraph that's
\theoremstyle{notitle}
 \title[Divisibility]{On the divisibility of the class numbers of
    quaternion orders}
\author{Yucui Lin}
\address{(Lin) School of
  Mathematics and Statistics, Wuhan University, Luojiashan, 430072,
  Wuhan, Hubei, P.R. China}   
\email{yucui.lin@whu.edu.cn}
 \author{Jiangwei Xue}
\address{(Xue) Collaborative Innovation Center of Mathematics, School of
  Mathematics and Statistics, Wuhan University, Luojiashan, 430072,
  Wuhan, Hubei, P.R. China}   
\address{(Xue) Hubei Key Laboratory of Computational Science (Wuhan
  University), Wuhan, Hubei,  430072, P.R. China.} 
\email{xue\_j@whu.edu.cn}
\begin{document}
\date{\today} 
 \subjclass[2020]{11R52, 11R29} 
 \keywords{quaternion algebra, class number, Eichler order, residually
   unramified order, selectivity.}
\begin{abstract}
Let $F$ be a number field, and $D$ be a quaternion $F$-algebra.  We
show that the class number of any residually unramified $O_F$-order
(e.g.~an Eichler order) in $D$ is divisible by the class number of
$F$. 
\end{abstract}

\maketitle
%\tableofcontents   %% this add the table of contents after the abstract.
%%%%%%%%%%            How to add to content line              %%%%%%%%%%%
%\addcontentsline{toc}{chapter}{\protect\numberline{}Appendix}

%%%%%%%%%%%%%%%%%              START HERE        %%%%%%%%%%%%%%%%%%%%%%%%
%%%%%%%%%%%%%%%%%%%%%%%%%%%%%%%%%%%%%%%%%%%%%%%%%%%%%%%%%%%%%%%%%%%%%%%%%

%\linenumbers

\section{Introduction}

%The algebra
%$D$ admits a canonical involution $\alpha\mapsto \bar \alpha$ such
%that $\Tr(\alpha)=\alpha+\bar \alpha$ and
%$\Nr(\alpha)=\alpha \bar \alpha$ are respectively the \emph{reduced trace}
%and \emph{reduced norm} of $\alpha\in D$.

Let $F$ be a number field, and $D$ be a quaternion $F$-algebra.
Let
$O_F$ be the ring of integers of $F$, and $\calO$ be an $O_F$-order
(of full rank) in
$D$.  For each finite prime $\grp$ of $F$, we write
$\calO_\grp$ for
the $\grp$-adic completion of $\calO$, and $\grJ(\calO_\grp)$ for the 
Jacobson radical of $\calO_\grp$. Following \cite[Definition~24.3.2]{voight-quat-book},
we say $\calO$ is \emph{residually unramified at $\grp$} if
$\calO_\grp/\grJ(\calO_\grp)$ is not equal to the finite residue field
$O_F/\grp$. If $\calO$ is residually unramified at every
finite prime $\grp$ of $F$, then we simply say that $\calO$ is \emph{residually
unramified}. For example, every Eichler order is residually
unramified by \cite[Lemma~24.3.6]{voight-quat-book}.

 % two locally principal (fractional) right $\calO$-ideals $I$ and $I'$
% in $D$ belong to \emph{the same ideal class} if they are isomorphic as
% right $\calO$-modules (equivalently, if there exists $\alpha\in
% D^\times$ such that $I'=\alpha I$. Let 

By definition, the class number $h(\calO)$ is the cardinality of
the finite set $\Cl(\calO)$ of locally principal nonzero right $\calO$-ideal
classes in $D$. 
If we write $\wcO$ for the profinite completion
of $\calO$, and  $\whD$ for the ring of finite adeles of $D$, then
$\Cl(\calO)$ may be described adelically as 
\begin{equation}
  \label{eq:189}
  \Cl(\calO)\simeq D^\times\bsh
    \whD^\times/\wcO^\times.   
  \end{equation}
  It is well known that $h(\calO)$ depends only on the genus  of
$\calO$. In other words, if $\calO'$ is another $O_F$-order in $D$
\emph{belonging to the same
genus} as $\calO$ (that is, $\calO'_\grp$ is isomorphic to
$\calO_\grp$ for every finite prime $\grp$ of $F$), then  $h(\calO')=h(\calO)$.  As usual, the
class number of $F$ is denoted by $h(F)$.

% Given an $O_F$-order $B$ (of full rank) in 
% a finite field extension $K/F$,  we write $h(B)$ for the class
% number of $B$, that is, $h(B):=\abs{\Cl(B)}$, where $\Cl(B)$ is the
% ideal class group (i.e.~Picard
% group) of $B$ as in \cite[Definition~12.5]{Neukirch-ANT}. For simplicity,  put
% $h(K):=h(O_K)$ as usual. 

The main result of this paper is as follows. 
\begin{thm}\label{thm:main}
If $\calO$ is residually unramified, then $h(\calO)$ is divisible by
$h(F)$. 
\end{thm}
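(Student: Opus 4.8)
The plan is to construct a surjective map from $\Cl(\calO)$ onto the class group $\Cl(F)$ (or rather, onto a set of size $h(F)$), which will immediately give $h(F) \mid h(\calO)$. The natural candidate is the reduced norm $\Nr\colon \whD^\times \to \whF^\times$, which descends to a map on the double coset spaces. Concretely, I would use the adelic description \eqref{eq:189} and the fact that the reduced norm sends $D^\times$ into $F^\times$ and $\wcO^\times$ into $\wcO_F^\times$, so it induces a map
\[
\Nr_*\colon \Cl(\calO)\simeq D^\times\bsh \whD^\times/\wcO^\times \longrightarrow F^\times\bsh \whF^\times/\Nr(\wcO^\times).
\]
The target here is a quotient of the idele class group. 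The key point will be to identify it: if $\Nr(\wcO^\times) = \wcO_F^\times$, i.e.\ the reduced norm of the unit group of every local completion $\calO_\grp$ is the full local unit group $O_{F,\grp}^\times$, then the target is exactly $F^\times\bsh\whF^\times/\wcO_F^\times \simeq \Cl(F)$, the ray class group of conductor $1$, which has cardinality $h(F)$.

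So the two things to establish are: (i) $\Nr(\calO_\grp^\times) = O_{F,\grp}^\times$ for every finite prime $\grp$, using the residually unramified hypothesis; and (ii) surjectivity of $\Nr_*$. For (ii), surjectivity of $\Nr_*$ reduces to surjectivity of $\Nr\colon \whD^\times \to \whF^\times$ together with the local statement, and surjectivity of the reduced norm locally is classical: over a non-archimedean local field the reduced norm $D_\grp^\times \to F_\grp^\times$ is surjective (for both the split and the division case), so $\Nr\colon \whD^\times \to \whF^\times$ is onto, and combined with (i) the induced map $\Nr_*$ is onto $\Cl(F)$. The main obstacle is really step (i): I need that for a residually unramified order, the local reduced norm hits every unit. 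When $\calO_\grp$ is maximal this is standard; when $D_\grp$ is split and $\calO_\grp$ is an Eichler order $\begin{pmatrix} O & O \\ \grp^n & O\end{pmatrix}$ one can exhibit diagonal units realizing any element of $O_{F,\grp}^\times$ as a norm. The residually unramified condition should be exactly what guarantees $\calO_\grp^\times$ is large enough — the excluded case $\calO_\grp/\grJ(\calO_\grp) = O_F/\grp$ would be a genuinely "thin" order where the norm might miss units. I would prove (i) by a case analysis on the structure of $\calO_\grp$ afforded by the classification of local quaternion orders, checking in each residually unramified case that a suitable torus or diagonal subgroup of $\calO_\grp^\times$ surjects onto $O_{F,\grp}^\times$ under $\Nr$.

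Finally I would assemble the pieces: $\Nr_*\colon \Cl(\calO) \twoheadrightarrow \Cl(F)$ is a surjection of finite sets, hence $h(F) = \#\Cl(F)$ divides $h(\calO) = \#\Cl(\calO)$. One subtlety to be careful about is that $\Cl(\calO)$ is a set of ideal classes, not a group (when $D$ is indefinite or more generally when the Eichler condition fails it need not carry a natural group structure), so I cannot phrase this as a group homomorphism with a kernel; I must argue divisibility directly from surjectivity, which in the absence of group structure requires knowing that the fibers of $\Nr_*$ all have the same cardinality. That equidistribution of fibers is the remaining technical point: it should follow from a transitive action of a suitable group (e.g.\ $\whF^\times/\Nr(\wcO^\times)$ acting on $\Cl(\calO)$ by scaling ideals, with the norm map equivariant) on $\Cl(\calO)$ permuting the fibers, so that all fibers have equal size. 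I expect this action argument, rather than any hard computation, to be where the real care is needed.
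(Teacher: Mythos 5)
Your setup coincides with the paper's: the reduced norm induces $\Phi\colon \Cl(\calO)\to \Cl(O_F)$, the residually unramified hypothesis gives $\Nr(\wcO^\times)=\whO_F^\times$ so that the target is genuinely $\Cl(O_F)$, and surjectivity is not where the difficulty lies. You also correctly identify the crux: $\Cl(\calO)$ is only a pointed set, so you must show that all fibers of $\Phi$ have the same cardinality. The gap is in how you propose to do that. The action of $\whF^\times/\Nr(\wcO^\times)$ (equivalently, the scaling action $([\gra],[I])\mapsto[\gra I]$ of \eqref{eq:3}) is \emph{not} equivariant for translation on the target: a central element $a\in F^\times$ has reduced norm $a^2$, so $\Phi([\gra I])=[\gra]^2\,\Phi([I])$. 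The action therefore only moves a fiber within its coset modulo squares, and when $h(F)$ is even it cannot carry one fiber to an arbitrary other one. There is no natural substitute: a non-central idele $\hat y\in\whD^\times$ acts on $D^\times\bsh\whD^\times/\wcO^\times$ by neither left nor right multiplication. This is essentially the flaw in Vign\'eras's original argument that the paper recounts and that Ponomarev flagged. The instability is real, not an artifact: for $F=\Q(\sqrt{7})$ and $D$ totally definite and unramified at every finite prime, a maximal order has $h(\calO)=3$ while $h^+(F)=2$, so the two fibers of $\Psi\colon\Cl(\calO)\to\Cl^+(O_F)$ have sizes $1$ and $2$. A generic ``transitive action permutes the fibers'' argument would apply equally well to $\Psi$ and ``prove'' $h^+(F)\mid h(\calO)$, which is false; so whatever equalizes the fibers of $\Phi$ must distinguish $\Cl(O_F)$ from $\Cl^+(O_F)$, and your mechanism does not.

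For comparison, the paper splits into cases. If $D$ satisfies the Eichler condition, strong approximation makes $\Psi$ a bijection onto a group of order divisible by $h(F)$, and one is done. If $D$ is totally definite and ramified at some finite prime, the fibers of $\Psi$ do all have equal size by \cite[Theorem~1.3]{xue-yu:spinor-class-no}. The only hard case is $D$ totally definite and unramified at all finite primes, and there the proof is not soft: one writes each fiber cardinality $h_\scc(\calO_l(I))$ via an explicit formula involving embedding numbers of CM $O_F$-orders, isolates the \emph{selective} orders $B$ (whose fraction fields $K$ must be everywhere unramified over $F$, hence contained in the narrow Hilbert class field $H^+$), and then uses the linear disjointness of the CM field $K$ from the totally real Hilbert class field $H$ to show that the selective contributions sum to the same value over each fiber of $\pi\colon\Cl^+(O_F)\to\Cl(O_F)$. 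That arithmetic input --- the class number formula plus the linear disjointness --- is what your proposal is missing.
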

 There are multiple layers to Theorem~\ref{thm:main}. To explain
them, assume that $\calO$ is residually unramified henceforth.  The reduced norm map $\Nr: D^\times\to F^\times$ induces the following surjective map: 
\begin{equation}\label{eq:4}
\Psi: D^\times\bsh
    \whD^\times/\wcO^\times\xrightarrow{\Nr} F_D^\times\bsh \whF^\times/\Nr(\wcO^\times).
\end{equation}
Here $F_D^\times:=\Nr(D^\times)$, which coincides with the subgroup of $F^\times$ consisting of the
elements that are positive at each infinite place of $F$ ramified in
$D$ by the Hasse-Schilling-Maass theorem
\cite[Theorem~33.15]{reiner:mo}
\cite[Theorem~III.4.1]{vigneras}. Moreover, since $\calO$ is
residually unramified, it has been shown in the proof of
\cite[Lemma~2.17]{Xue-Yu-Selec-2022} that 
\begin{equation}
  \label{eq:1}
  \Nr(\wcO^\times)=\whO_F^\times. 
\end{equation}

If $D$ satisfies the Eichler condition (that is, there exists an infinite
place of $F$ that is split in $D$), then $\Psi$ is in fact
bijective by the strong approximation
theorem \cite[III.4.3]{vigneras}.  See \cite[Corollaire~III.5.7]{vigneras} for the proof in the Eichler order case, which applies
to the current setting as well.  Following \cite[\S III.5,
p.~88]{vigneras}, we define the \emph{restricted class number of $F$
  with respect to $D$} as
$h_D(F):=\abs{\whF^\times/(F_D^\times\whO_F^\times)}$.  
Thus if $D$ satisfies the Eichler condition, then $h(\calO)=h_D(F)$,
which is obviously divisible by $h(F)$.

Now suppose  that $D$ does not satisfy the Eichler
condition. Necessarily, $F$ is a totally real field, and $D$ is
ramified at all infinite places of $F$. Such quaternion algebras are
called \emph{totally definite}.  In this case, $F_D^\times$ is just the group  $F_+^\times$ of totally positive
elements of $F^\times$, so the right hand side of  (\ref{eq:4})
coincides with the  
narrow class group $\Cl^+(O_F)$, and the map 
$\Psi$  can be  interpreted concretely as follows
\begin{equation}
  \label{eq:5}
  \Psi: \Cl(\calO)\to \Cl^+(O_F), \qquad [I]\mapsto [\Nr(I)]^+.   
\end{equation}
Here for each nonzero locally principal right $\calO$-ideal
$I\subset D$, we
write $[I]$ for its right $\calO$-ideal class, and $[\Nr(I)]^+$ for the narrow
$O_F$-ideal class of $\Nr(I)$. The map $\Psi$ remains surjective but is
no longer  injective
in general.

The properties of totally definite
quaternion $F$-algebras closely mirror those of CM-extensions of $F$
in many aspects.  For example, if $[F:\Q]$ is odd, then every
CM-extension $K/F$ is ramified\footnote{See
  \cite[Proposition~3.1]{Gross2003Rama} for the proof by B.H.~Gross
  and \cite[Corollary~1.3]{YangTonghai-2005PAMQ} by Tonghai Yang. A
  simple proof by David Rohrlich is also reproduced in
  \cite[p.~327]{YangTonghai-2005PAMQ}.}  at some finite prime of $F$.
Similarly, the same parity condition on $[F:\Q]$
implies that  every totally definite quaternion $F$-algebra $D$ is
ramified at some finite prime of $F$ since the total number of places
of $F$ ramified in $D$ has to be even.  In another aspect, if $F$ is
an arbitrary totally real field and $K/F$ is a CM-extension, then
$h(K)$ is divisible by $h(F)$ by
\cite[Theorem~4.10]{Washington-cyclotomic}.  Moreover, if $K/F$ is
further assumed to be ramified at some finite prime of $F$, then the same
proof (replacing the Hilbert class field with the narrow Hilbert class
field) shows that $h(K)$ is divisible by the \emph{narrow class
  number} $h^+(F)$.  
An analogues divisibility result to this latter case holds for totally definite
quaternion algebras by
\cite[Theorem~1.3]{xue-yu:spinor-class-no},  namely, if $D$ is ramified at some finite prime
of $F$, then $h(\calO)$ is divisible by $h^+(F)$. Indeed, it was shown in
\cite{xue-yu:spinor-class-no} that this ramification assumption
on $D$ forces the
fibers of $\Psi$ to share the same cardinality, and hence $h(\calO)$ is
divisible by $h^+(F)$.

Unfortunately, this divisibility result does not hold as soon as the
 ramification assumption on $D$ is dropped. For example, let
$F=\Q(\sqrt{7})$, and $D$ be the totally definite quaternion
$F$-algebra that is unramified at all the finite primes of $F$. Then 
$h(\calO)=3$ for any maximal order $\calO$ in $D$ by \cite[Table~1,
p.~676]{xue-yang-yu:ECNF}, which is not divisible by
$h^+(F)=2$. Thus to prove Theorem~\ref{thm:main}, we still need to treat the
following 
remaining case. 

\begin{prop}\label{prop:III}
  Let $F$ be a totally real number field of even degree over $\Q$, and $D$ be the unique totally definite quaternion
$F$-algebra that is unramified at all the finite primes of
$F$. Suppose that $\calO$ is residually unramified. Then $h(\calO)$ is
divisible by $h(F)$. 
\end{prop}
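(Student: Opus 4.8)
The plan is to exhibit $h(F)$ as the common size of the orbits of a free group action, in direct analogy with the proof of $h(F)\mid h(K)$ for a CM-extension $K/F$ recalled in the introduction. For a fractional $O_F$-ideal $\mathfrak a$ and a nonzero locally principal right $\calO$-ideal $I\subset D$, the product $\mathfrak a I$ is again such an ideal, and $([\mathfrak a],[I])\mapsto[\mathfrak a I]$ defines an action of $\Cl(O_F)$ on $\Cl(\calO)$ (if $\mathfrak a=(\lambda)$ with $\lambda\in F^\times$, then $\mathfrak a I=\lambda I\in[I]$). If this action is free, every orbit has exactly $h(F)$ elements and therefore $h(F)\mid h(\calO)$. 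So the task is to prove freeness. Suppose $[\mathfrak a]\cdot[I]=[I]$, i.e.\ $\mathfrak a I=xI$ for some $x\in D^\times$; multiplying on the right by $I^{-1}$ and using $II^{-1}=\calO_l(I)$ gives $\mathfrak a\calO'=x\calO'$, where $\calO':=\calO_l(I)$ is the left order of $I$ (itself in the genus of $\calO$, hence residually unramified). Thus it suffices to prove the following: if $\calO'$ is any $O_F$-order in $D$ and $\mathfrak a\calO'=x\calO'$ for a fractional $O_F$-ideal $\mathfrak a$ and some $x\in D^\times$, then $\mathfrak a$ is principal.

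To prove this I would imitate the CM argument step by step. Comparing reduced-norm ideals of $\mathfrak a\calO'=x\calO'$ gives $\mathfrak a^2=\Nr(x)O_F$, and comparing left orders gives $x\calO'x^{-1}=\calO'$. Now apply the standard involution $z\mapsto\bar z$ of $D$, which preserves every $O_F$-order because $\bar z=\mathrm{Trd}(z)-z$ and $\mathrm{Trd}(z)\in O_F$: from $\overline{\mathfrak a\calO'}=\overline{x\calO'}$ one reads off $\mathfrak a\calO'=\calO'\bar x$, which together with $\mathfrak a\calO'=x\calO'=\calO' x$ yields $\epsilon:=\bar x x^{-1}\in\calO'^\times$ with $\Nr(\epsilon)=1$; since $\bar\epsilon=\epsilon^{-1}$ and $x=\overline{\bar x}=\overline{\epsilon x}=\epsilon x\epsilon^{-1}$, the unit $\epsilon$ commutes with $x$. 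If $x\in F^\times$, then $\mathfrak a^2=(x)^2$ forces $\mathfrak a=(x)$, and we are done. Otherwise $K:=F(x)$ is a quadratic field inside $D$; since $D$ is totally definite it is a division algebra and $K$ is totally imaginary, i.e.\ $K/F$ is a CM-extension, and the standard involution restricts on $K$ to the nontrivial automorphism $c$ of $K/F$. Thus $\epsilon=c(x)/x$ has $N_{K/F}(\epsilon)=1$, so by Hilbert~90 we may write $\epsilon=c(\delta)/\delta$ with $\delta\in K^\times$; then $\gamma:=x/\delta$ is fixed by $c$, i.e.\ $\gamma\in F^\times$, and $(\gamma^{-1}\mathfrak a)\calO'=\delta\calO'$. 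A local computation at each finite prime $\grp$ of $F$—using that a unit of any $O_F$-order in the \'etale $F$-algebra $K\otimes_F F_{\grp}$ lies in the maximal order $\prod_{\mathfrak P\mid\grp}O_{K_{\mathfrak P}}$—shows $(\gamma^{-1}\mathfrak a)O_K=\delta O_K$. Therefore $[\mathfrak a]=[\gamma^{-1}\mathfrak a]$ is killed by the extension-of-ideals homomorphism $\Cl(O_F)\to\Cl(O_K)$, which is injective because $K/F$ is a CM-extension—this injectivity is exactly the input behind $h(F)\mid h(K)$ in \cite[Theorem~4.10]{Washington-cyclotomic}. Hence $[\mathfrak a]=1$, the action is free, and $h(F)\mid h(\calO)$.

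The step I expect to be the crux is the reduction of the intermediate claim of the second paragraph (that $\mathfrak a\calO'=x\calO'$ forces $\mathfrak a$ principal) to the CM-extension case: pushing the generator $x$ into a quadratic subfield $K$ via the standard involution and Hilbert~90, and then promoting $\mathfrak a\calO'=x\calO'$ to an honest equality of $O_K$-ideals. This is where one genuinely uses that $D$ is a division algebra (so $F(x)$ is a field and $K/F$ is automatically CM) and the local structure of $K\otimes_F F_{\grp}$, and where one must check that the argument applies to every order in the genus of $\calO$, not merely to $\calO$ itself. Finally, I would stress that the more obvious strategy—showing that the reduced-norm map $\Psi$ of (\ref{eq:5}), or its composite with $\Cl^+(O_F)\to\Cl(O_F)$, has fibres of constant cardinality—cannot work: for $F=\Q(\sqrt 7)$ the fibres of $\Psi$ already have sizes $1$ and $2$. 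It is precisely the class-group \emph{action} that yields divisibility by the ordinary class number $h(F)$, rather than only by $h^+(F)$ as in the case treated in \cite{xue-yu:spinor-class-no}.
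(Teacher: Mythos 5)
Your reduction is carried out correctly up to a point: from $[\gra I]=[I]$ you correctly get $\gra\calO'=x\calO'$ with $\calO'=\calO_l(I)$, and the involution/Hilbert~90/local-unit argument does show that $[\gra]$ lies in the kernel of the extension-of-ideals map $j\colon\Cl(O_F)\to\Cl(O_K)$ for some CM-extension $K=F(x)$ embedded in $D$. The fatal step is the last one: $j$ is \emph{not} injective for a general CM-extension, and injectivity is not ``the input behind'' \cite[Theorem~4.10]{Washington-cyclotomic}. That proof uses $H\cap K=F$ to obtain a surjection $\Cl(O_K)\twoheadrightarrow\Gal(HK/K)\simeq\Gal(H/F)\simeq\Cl(O_F)$, i.e.\ surjectivity of the norm map on class groups; it says nothing about capitulation. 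Since $N_{K/F}\circ j$ is squaring on $\Cl(O_F)$, the kernel of $j$ is $2$-torsion, but it can be nontrivial: take $F=\Q(\sqrt{10})$ and the CM-extension $K=F(\sqrt{-2})$. The class of $\grp_2=(2,\sqrt{10})$ is nontrivial in $\Cl(O_F)$ (as $a^2-10b^2=\pm2$ is insoluble mod $5$), yet $\grp_2O_K=(\sqrt{-2})$ because both ideals square to $2O_K$ and ideal factorization is unique. Note also that every CM-extension of $F$ embeds into the present $D$ (its ramified places are exactly the infinite ones), so no embedding obstruction rescues the argument.

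This is not a repairable local defect in your write-up, because freeness of the action \eqref{eq:3} is precisely the gap in Vign\'eras's original argument recalled in the introduction: Ponomarev's observation \cite[p.~103]{ponomarev:aa1981} that $h(\calO)/h(F)$ need not equal the type number means the action is not free in general, so no correct proof of Proposition~\ref{prop:III} can proceed by showing all orbits have size $h(F)$; what your computation really identifies is that stabilizers are made of $2$-torsion classes capitulating in CM-fields inside $D$, i.e.\ exactly the phenomenon that can occur. Your closing remark also conflates $\Psi$ with $\Phi=\pi\circ\Psi$: the $\Q(\sqrt{7})$ example only shows the fibers of $\Psi$ in \eqref{eq:5} vary (and there $h(F)=1$, so constancy of the fibers of $\Phi$ is vacuous). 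The paper's proof is precisely the strategy you dismiss: it shows the fibers of $\Phi$ in \eqref{eq:7} all have the same size, using the spinor class number formula of \cite{xue-yu:spinor-class-no} together with the linear disjointness $H\cap K=F$ for the selective CM-orders --- the same CM principle you invoke, but used through the Artin/norm direction rather than through the false injectivity of $\Cl(O_F)\to\Cl(O_K)$.
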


Proposition~\ref{prop:III} was first claimed by Vign\'eras
in \cite[Remarque, p.~82]{vigneras:ens} for maximal orders.  She
studied the following  natural multiplicative action of $\Cl(O_F)$ on $\Cl(\calO)$:
\begin{equation}
  \label{eq:3}
  \Cl(O_F)\times \Cl(\calO)\to \Cl(\calO), \qquad ([\gra],
  [I])\mapsto [\gra I].   
\end{equation}
 Vign\'eras 
observed that under the assumption of Proposition~\ref{prop:III}, the
number of orbits of this action coincides with the type number
 $t(\calO)$ when $\calO$ is maximal. However, this action is not
necessarily free as Vign\'eras's argument had suggested. Indeed, 
it was later pointed out by Ponomarev \cite[concluding remarks,
p.~103]{ponomarev:aa1981} that  $h(\calO)/h(F)$ is not necessarily
equal to the type number of $\calO$, so 
Vign\'eras's proof needs some adjustments. A complete proof for
integrality of $h(\calO)/h(F)$ based on a
refinement of Vigneras's ideas was supplied by Chia-Fu Yu and 
the second named author  in \cite[\S5]{xue-yu:type_no}. 
However, this proof makes explicit use of the maximality of
$\calO$ and cannot be easily generalized to non-maximal orders.

In this paper, we take a different approach  and make use of the class
number formulas developed in \cite{xue-yu:spinor-class-no}.  Consider
the following composition of $\Psi:\Cl(\calO)\to \Cl^+(O_F)$ with the canonical projection map $\pi:\Cl^+(O_F)\to
\Cl(O_F)$: 
\begin{equation}\label{eq:7}
    \Phi=\pi\circ\Psi: \Cl(\calO)\to \Cl(O_F), \qquad [I]\mapsto [\Nr(I)]. 
\end{equation}
We shall show that the fibers of $\Phi$ share the
same cardinality,  thus proving Proposition~\ref{prop:III}.  As in the
proof of \cite[Theorem~4.10]{Washington-cyclotomic}, the divisibility
of $h(K)$ by $h(F)$ for a CM-extension $K/F$ stems from the linear
disjointness of $K$ from the Hilbert class field $H$ of $F$. It turns
out that the same principle lies at the root of the divisibility of $h(\calO)$
by $h(F)$ as well. The details of the proof will be worked out
in Section~\ref{sec:proof}.

\section{The proof of Proposition~\ref{prop:III}}
\label{sec:proof}

Keep the notation and assumption of Proposition~\ref{prop:III}
throughout this section.  By definition, each fiber of the map $\Phi$
in \eqref{eq:7} is a disjoint union of fibers of $\Psi: \Cl(\calO)\to \Cl^+(O_F)$ over a coset
of $\ker(\pi)$.  A formula for the cardinality of each fiber of $\Psi$
has been produced in \cite[Theorem~1.3]{xue-yu:spinor-class-no}. Thus
it is enough to show that the summation of such formulas over each
coset of $\ker(\pi)$ is constant,  that is,  independent of the choice
of the coset.

Let $h_\scc(\calO)$ be the cardinality of
the neutral fiber of $\Psi$. From
\cite[(2.12)]{xue-yu:spinor-class-no}, for any 
nonzero locally principal right $\calO$-ideal $I$, we have 
\begin{equation}
  \label{eq:6}
  \abs{\Psi^{-1}([\Nr(I)]^+)}=h_\scc(\calO_l(I)),  
\end{equation}
where $\calO_l(I):=\{x\in D:  xI\subseteq I\}$ is the left order of
$I$.  The order $\calO_l(I)$ belongs to the same genus as
$\calO$. Indeed, since $I$ is locally principal, its profinite
completion $\wh{I}$ can be written as $x\wcO$ for some $x\in
\whD^\times$, which implies that 
\begin{equation}
\wh{\calO_l(I)}=x \wcO x^{-1}.   
\end{equation}
Let
$\whD^1:=\ker(\whD^\times\xrightarrow{\Nr} \whF^\times)$ be the
reduced norm one subgroup of $\whD^\times$. Two members $[I], [I']\in
\Cl(\calO)$ lie in the same fiber of $\Psi$ if and only if there
exists $y\in D^\times \whD^1$ such that $\wh{I}'=y\wh{I}$, in which
case
\begin{equation}
  \label{eq:9}
\wh{\calO_l(I')}= y\,\wh{\calO_l(I)}\, y^{-1}.     
\end{equation}

To write down a formula for $h_\scc(\calO_l(I))$ explicitly, we set up some
notations related to certain quadratic
$O_F$-orders called \emph{CM $O_F$-orders}. 
Since $D$ is totally definite, a quadratic field extension $K/F$
embeds into $D$ only if $K/F$ is a CM-extension.  An
$O_F$-order $B$ of full rank in a CM-extension
of $F$ 
 will be  called a \emph{CM $O_F$-order}.  Let $h(B)$ be the class
 number of $B$, and $w(B)$ be the unit group index $[B^\times:
 O_F^\times]$. According to  \cite[Remarks, p.~92]{Pizer1973}
 (cf.~\cite[\S3.1]{li-xue-yu:unit-gp} and
 \cite[\S3.3]{xue-yang-yu:ECNF}), 
there are only finitely many CM $O_F$-orders $B$ satisfying $w(B)>1$,  
so we collect them into a finite set $\scrB$.

For each CM $O_F$-order $B$ with fractional field $K$, we write
$\Emb(B, \calO)$ for the set of \emph{optimal embeddings} of $B$ into
$\calO$, that is,
\begin{equation}
  \label{eq:22}
  \Emb(B, \calO)=\{\varphi\in \Hom_F(K, D): \varphi(K)\cap \calO=\varphi(B)\}.
\end{equation}
Similarly, at each finite prime $\grp$ of $F$, we write
$\Emb(B_\grp, \calO_\grp)$ for the set of optimal embeddings of
$B_\grp$ into $\calO_\grp$.  The unit group $\calO_\grp^\times$ acts
on $\Emb(B_\grp, \calO_\grp)$ from the right by conjugation, and for
almost all $\grp$ there
is a unique orbit  by
\cite[Theorem~II.3.2]{vigneras}. Following \cite[\S V.2,
p.~143]{vigneras}, we put
\begin{equation}
  M(B):=\frac{h(B)}{w(B)}\prod_\grp \abs{\Emb(B_\grp,
    \calO_\grp)/\calO_\grp^\times},  
\end{equation}
where the product runs over all finite prime $\grp$ of $F$.  From the
definition,  $M(B)$ depends only on $B$ and the
genus of $\calO$.

Clearly, if
$\Emb(B, \calO)\neq \emptyset$, then
$\Emb(B_\grp, \calO_\grp)\neq \emptyset$ for every  $\grp$. Conversely, if
$\Emb(B_\grp, \calO_\grp)\neq \emptyset$ for every $\grp$, then there exists
an order $\calO_0$ in the same genus as $\calO$ such that
$\Emb(B, \calO_0)\neq \emptyset $ by
\cite[Corollary~30.4.18]{voight-quat-book}. Given $[I]\in \Cl(\calO)$,
we define a symbol as follows:
% \footnote{In light of (\ref{eq:9}) and
% \cite[Definition~1.2]{xue-yu:spinor-class-no}, the definition here
% matches with the one  in \cite[(1.9)]{xue-yu:spinor-class-no}.}
\begin{equation}
  \label{eq:10}
  \Delta(B, \calO_l(I))=
  \begin{dcases}
    1 & \parbox[t]{180pt}{if  there exists $ [I']\in \Cl(\calO)$ such that
      $\Psi([I'])=\Psi([I])$ and $\Emb(B, \calO_l(I'))\neq
      \emptyset$,}\\[4pt]
    0 & \text{otherwise.}
  \end{dcases}
\end{equation}
In light of (\ref{eq:9}) and
\cite[Definition~1.2]{xue-yu:spinor-class-no}, the definition here
matches with the one in \cite[(1.9)]{xue-yu:spinor-class-no}. Particularly,  we recover the definition of
$\Delta(B,\calO)$ in \cite[(1.9)]{xue-yu:spinor-class-no} by taking
$I=\calO$ in \eqref{eq:10}.  By
definition, $\Delta(B, \calO_l(I))$ takes the same value on the fibers
of $\Psi$, so it descends along $\Psi: \Cl(\calO)\to \Cl^+(O_F)$ to a
map
\begin{equation}\label{eq:20}
\delta:  \Cl^+(O_F)\to \{0, 1\}, \qquad [\Nr(I)]^+\mapsto \Delta(B,
  \calO_l(I)). 
\end{equation}
Since every order in the same genus as $\calO$ is isomorphic to some
$\calO_l(I)$, the map $\delta$ takes constant
value $0$ if and only if there exists $\grp$ such that $\Emb(B_\grp,
\calO_\grp)=\emptyset$ (equivalently, $M(B)=0$).
If the map $\delta$ is surjective, then we say that $B$ is
\emph{optimally spinor selective} (\emph{selective} for short) for the
genus of $\calO$.  We shall see from the formula of
$h_\scc(\calO_l(I))$ in (\ref{eq:8}) that such orders $B$ are really the
culprits behind the variation in size of the fibers of $\Psi$.
Since
$D$ is unramified at all finite primes of $F$ and $\calO$ is
residually unramified by assumption, we get the following
characterization of  CM $O_F$-orders that are selective for the genus
of $\calO$
by combining \cite[Theorem~2.5 and
Lemma~2.7]{xue-yu:spinor-class-no}.  

% To characterize the , we write $\grd(\calO)$ for the
% \emph{reduced discriminant} \cite[\S I.4, p.~24]{vigneras} of
% $\calO$. For each finite prime $\grp$ of $F$, let 

\begin{lem}\label{lem:sel}
  Let $B$ be a CM $O_F$-order with fractional field $K$. Suppose 
  that $\Emb(B_\grp, \calO_\grp)\neq \emptyset$ for every finite prime
  $\grp$ of $F$ (that is, $M(B)\neq 0$). Then $B$ is  selective for the genus of
  $\calO$ if and only if both of the following conditions hold:
  \begin{enumerate}[label=(\alph*)]
  \item $K/F$ is unramified at every finite prime of $F$,
  \item   if $\grp$ is a finite prime of $F$ with $\nu_\grp(\grd(\calO))\equiv
  1\pmod{2}$, then $\grp$ splits in $K$. 
\end{enumerate}
Here  $\nu_\grp: F^\times\twoheadrightarrow \Z$ denotes the normalized discrete
valuation of $F$ attached to $\grp$,  and $\grd(\calO)$ denotes the 
\emph{reduced discriminant} of $\calO$.

Moreover, if $B$ is selective for the genus of
$\calO$, then
\begin{equation}\label{eq:13}
  \Delta(B, \calO_l(I))=(\Nr(I), K/F)+\Delta(B, \calO), 
\end{equation}
where $(\Nr(I), K/F)$ is the Artin symbol\footnote{Here the Artin
  symbol is well-defined since $K/F$ is unramified at all finite
  primes of $F$ by part (a) of the lemma.} of $\Nr(I)$ in $\Gal(K/F)$,
and the summation  is taken within $\zmod{2}$ with the
canonical identification $\Gal(K/F)\cong \zmod{2}$. 
\end{lem}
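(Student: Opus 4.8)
The plan is to extract both halves of the lemma from the corresponding results in \cite{xue-yu:spinor-class-no}, using the hypotheses that $D$ is unramified at every finite prime and $\calO$ is residually unramified to simplify the selectivity criterion. First I would recall that, by the reduction carried out above, $\delta$ is the descent of $\Delta(B,\calO_l(I))$ along $\Psi$, and that $\delta$ is surjective precisely when $B$ is selective; so the question is to determine when the two-valued invariant $\Delta(B,\calO_l(\,\cdot\,))$ is nonconstant on $\Cl(\calO)$. The criterion \cite[Theorem~2.5]{xue-yu:spinor-class-no} characterizes optimal spinor selectivity in terms of a certain field (the ``spinor class field'') attached to the genus of $\calO$: $B$ is selective iff the CM field $K$ embeds into that spinor class field over $F$. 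I would then invoke \cite[Lemma~2.7]{xue-yu:spinor-class-no}, which, under the present hypotheses (trivial ramification of $D$ at finite primes, residual unramifiedness of $\calO$), identifies this spinor class field explicitly: it is the compositum of the narrow Hilbert class field of $F$ with the ray-class-type field cut out by the primes $\grp$ dividing $\grd(\calO)$ to odd order. Unwinding ``$K$ embeds into this compositum over $F$'' gives exactly conditions (a) and (b): condition (a) says $K$ is contained in (a field unramified at all finite primes over) $F$, i.e.\ $K/F$ is unramified at all finite primes; condition (b) says the Frobenius at each odd-order prime $\grp\mid\grd(\calO)$ is trivial in $\Gal(K/F)$, i.e.\ $\grp$ splits in $K$.

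For the second assertion, equation~\eqref{eq:13}, I would again appeal to \cite{xue-yu:spinor-class-no}: when $B$ is selective, the fiber of $\Psi$ on which $\Emb(B,\calO_l(I'))\neq\emptyset$ is governed by a single class-field-theoretic condition, and the dependence of $\Delta(B,\calO_l(I))$ on $[I]$ factors through $\Nr(I)\in\whF^\times$ via the Artin map to $\Gal(K/F)\cong\zmod 2$. Concretely, $\Delta(B,\calO_l(I))=1$ iff $(\Nr(I),K/F)+\Delta(B,\calO)\equiv 1\pmod 2$, which is the stated identity once one checks the base case $I=\calO$ (where $(\Nr(\calO),K/F)$ is trivial, consistent with $\Delta(B,\calO_l(\calO))=\Delta(B,\calO)$). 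The Artin symbol is well defined by part~(a). This is essentially a rephrasing of the formula in \cite{xue-yu:spinor-class-no} after translating from their ``$\Delta$ as a function on fibers'' language to the Artin-symbol language via \eqref{eq:9}.

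The main obstacle I anticipate is purely bookkeeping rather than conceptual: one must verify that the normalizations in \cite{xue-yu:spinor-class-no} (the definition of $\Delta(B,\calO)$ in their (1.9), the indexing of spinor classes, the choice of base point for the Artin symbol) match the conventions set up here, and in particular that the ``$+$'' in \eqref{eq:13} is the correct sign once $\Gal(K/F)$ is identified with $\zmod 2$ additively. The hypothesis that $D$ is split at all finite primes is what collapses the local selectivity obstructions down to just the parity condition (b) on $\grd(\calO)$ and removes any contribution from ramified primes of $D$; I would make sure this is used exactly where \cite[Lemma~2.7]{xue-yu:spinor-class-no} requires it. No genuinely new argument is needed beyond assembling these two cited results and transcribing their output into conditions (a), (b) and formula~\eqref{eq:13}.
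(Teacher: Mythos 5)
Your proposal matches the paper's treatment: the paper gives no independent proof of Lemma~\ref{lem:sel} and simply obtains it by combining \cite[Theorem~2.5 and Lemma~2.7]{xue-yu:spinor-class-no}, which is exactly what you do. (One small wobble: the relevant spinor class field here is a \emph{subfield} of the narrow Hilbert class field $H^+$ — namely the fixed field of the Frobenii of the primes $\grp$ with $\nu_\grp(\grd(\calO))$ odd — not a compositum of $H^+$ with another field; but your unwinding into conditions (a), (b) and formula \eqref{eq:13} is the intended one.)
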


Given a CM $O_F$-order $B$ with fractional field $K$, we put
\begin{equation}
s(B, \calO):=  \begin{cases}
    1 & \text{if $K$ satisfies  conditions (a) and (b) in
      Lemma~\ref{lem:sel},}\\
    0 & \text{otherwise.} 
  \end{cases}
\end{equation}
By definition, $s(B, \calO)$ depends only on  $K$ and
the genus of $\calO$.

With the notations $M(B), \Delta(B, \calO_l(I))$ and $s(B,\calO)$ in
place, we are now ready to recall the formula for
$h_\scc(\calO_l(I)$ from   
\cite[Theorem~1.3]{xue-yu:spinor-class-no}: 
\begin{equation}
  \label{eq:8}
  \begin{split}
        h_\scc(\calO_l(I))=&\Mass_\scc(\calO_l(I))\\ &+\frac{1}{2h^+(F)}\sum_{B\in
          \scrB}2^{s(B, \calO)}\Delta(B, \calO_l(I))(w(B)-1)M(B).
\end{split}        
    \end{equation}
Here $\Mass_\scc(\calO_l(I))$ denotes the \emph{spinor class
        mass} of $\calO$, which is defined in
      \cite[(3.3)]{xue-yu:spinor-class-no} and can be computed by
      formula \cite[(3.8)]{xue-yu:spinor-class-no}. From
      \cite[Lemma~3.1]{xue-yu:spinor-class-no},
      $\Mass_\scc(\calO_l(I))$ depends only on the genus of
      $\calO_l(I)$. In particular, 
      $\Mass_\scc(\calO_l(I))=\Mass_\scc(\calO)$, which is independent
      of the choice of $[I]\in \Cl(\calO)$.

To see the effects of the selective CM $O_F$-orders on the class
number $h_\scc(\calO_l(I))$ more clearly, we
partition the set $\scrB$ into two subsets 
\begin{equation}
  \scrB_\sel:=\{B\in \scrB:  s(B,\calO)=1 \text{ and } M(B)\neq
  0\}, \qquad \scrB_\non:=\scrB\smallsetminus \scrB_\sel.  
\end{equation}
If $B\in \scrB_\non$, then $\Delta(B, \calO_l(I))$ takes constant
value $0$ or $1$ for all  $[I]\in \Cl(\calO)$ depending on whether
$M(B)$ is zero or not. It follows that $\Delta(B,
\calO_l(I))M(B)=M(B)$ for $B\in \scrB_\non$. 
Thus the formula for $h_\scc(\calO_l(I))$ may be reorganized as
follows:
\begin{equation}\label{eq:11}
    \begin{split}
        h_\scc(\calO_l(I))=&\Mass_\scc(\calO)+\frac{1}{2h^+(F)}\sum_{B\in
          \scrB_\non}(w(B)-1)M(B)\\ &+\frac{1}{h^+(F)}\sum_{B\in
          \scrB_\sel}\Delta(B, \calO_l(I))(w(B)-1)M(B).
\end{split}        
\end{equation}
Here the first line on the right hand side of (\ref{eq:11}) depends
only on the genus of $\calO$ and is independent of the choice of
$[I]\in \Cl(\calO)$. It is clear from \eqref{eq:20} and (\ref{eq:11}) that as $[\Nr(I)]^+$
varies in $\Cl^+(O_F)$,   the variation in values of $\Delta(B,
\calO_l(I))$ for  $B\in
\scrB_\sel$ is  what causes the fibers of $\Psi:\Cl(\calO)\to
\Cl^+(O_F)$ to possibly 
vary in sizes.

%Whether and when a given CM $O_F$-order $B$ is optimally spinor selective
% for the genus of $\calO$ is the central question of \emph{optimal
%   spinor selectivity theory}
% \cite{M.Arenas-et.al-opt-embed-trees-JNT2018, Xue-Yu-Selec-2022, 
%   Maclachlan-selectivity-JNT2008, peng-xue:select}

% Clearly,  if 
% then $\Delta(B, \calO_l(I))=0$ for every $[I]\in \Cl(\calO)$. 
%  it turns out that for some CM $O_F$-order $B$, it may happen that
% $\Delta(B, \calO_l(I))=1$ for some but not all $[I]\in
% \Cl(\calO)$. Such orders

% As shown in \cite{xue-yu:spinor-class-no}, the reason that the  is caused by a subset of
% CM $O_F$-orders in $\scrB$ that are optimally spinor selective for the
% genus of $\calO$. 

%   Consider the subset of $\scrB$ consisting of all CM
% $O_F$-orders $B$ satisfying 

% Given a 
% nonzero locally principal right $\calO$-ideal $I$,
% we write $\calO_l(I)$ for the 
% If we denote the cardinality of
% the central fiber of $\Psi$ by  $h_\scc(\calO)$, then from
% \cite[(2.12)]{xue-yu:spinor-class-no}, 

Now consider the map $\Phi=\pi\circ\Psi: \Cl(\calO)\to \Cl(O_F)$ as
defined in (\ref{eq:7}), where $\pi: \Cl^+(O_F)\to \Cl(O_F)$ is the
canonical projection.  Given an $O_F$-ideal class
$[\gra]\in \Cl(O_F)$, our goal is to show that the size of the fiber
$\Phi^{-1}([\gra])$ is independent of the choice of $[\gra]$.
Actually, this is automatic if $\pi$ is an isomorphism. Indeed, for
every $B\in \scrB_\sel$,  its fractional field $K$ is contained in  the narrow Hilbert class
field $H^+$ of $F$ by part (a) of 
Lemma~\ref{lem:sel}. If $\pi$ is an isomorphism, then $H^+$ coincides
with the (wide) Hilbert class field $H$ of $F$, which is totally
real and cannot contain any CM subfield. Thus in this case
$\scrB_\sel=\emptyset$, which implies that
$h_\scc(\calO_l(I))$ is a constant independent of the choice of
$[I]\in \Cl(\calO)$.

For the remaining discussion, assume that $\pi$ is non-isomorphic
and put $r:=\abs{\ker(\pi)}$. 
By definition,
$\Phi^{-1}([\gra])$ is a disjoint union of $r$ distinct fibers of $\Psi$ over $\pi^{-1}([\gra])$. 
Let $\{[I_1], \cdots, [I_r]\}\subseteq
\Cl(\calO)$ be a complete set of representatives for such fibers, with
one ideal class for each fiber.  From (\ref{eq:6}), we have
\begin{equation}\label{eq:12}
  \abs{\Phi^{-1}([\gra])}=\sum_{i=1}^r h_\scc(\calO_l(I_i)).
\end{equation}
Combining (\ref{eq:11}) and (\ref{eq:12}), to show that 
  $\abs{\Phi^{-1}([\gra])}$ is independent of the choice of $[\gra]\in
  \Cl(O_F)$, it is enough to prove the following lemma.

  \begin{lem}
    For each $B\in \scrB_\sel$, we have $\sum_{i=1}^r \Delta(B,
    \calO_l(I_i))=r/2$. 
  \end{lem}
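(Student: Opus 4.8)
The plan is to read the lemma through formula \eqref{eq:13} and reduce it to a counting statement in $\zmod{2}$, the substance of which is the linear disjointness of the fractional field $K$ of $B$ from the wide Hilbert class field $H$ of $F$. First I would set up the class field theory. Since $B\in\scrB_\sel$, part (a) of Lemma~\ref{lem:sel} gives that the CM-extension $K/F$ is unramified at every finite prime of $F$, so $K$ is contained in the narrow Hilbert class field $H^+$, and Artin reciprocity identifies $\Cl^+(O_F)\xrightarrow{\sim}\Gal(H^+/F)$ in such a way that the projection $\pi\colon\Cl^+(O_F)\to\Cl(O_F)$ becomes the restriction $\Gal(H^+/F)\to\Gal(H/F)$ (so $\ker(\pi)\cong\Gal(H^+/H)$), and the Artin symbol $[\Nr(I)]^+\mapsto(\Nr(I),K/F)$ becomes the restriction homomorphism $\Gal(H^+/F)\to\Gal(K/F)\cong\zmod{2}$. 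Because $D$ is totally definite, $K$ is a CM field, hence totally imaginary, hence not contained in the totally real field $H$; equivalently, the restriction of $(\cdot,K/F)$ to $\ker(\pi)\cong\Gal(H^+/H)$ is a nontrivial homomorphism onto $\zmod{2}$, and therefore surjective. In particular $r=\abs{\ker(\pi)}$ is even, and exactly $r/2$ elements of $\ker(\pi)$ map to $0\in\zmod{2}$ while exactly $r/2$ map to $1$.

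Next I would use that $\Psi$ is surjective, so that $\pi^{-1}([\gra])$, a coset of $\ker(\pi)$ of size $r$, is precisely $\{\Psi([I_i])\}_{i=1}^r=\{[\Nr(I_i)]^+\}_{i=1}^r$. Since $(\cdot,K/F)$ is a group homomorphism, writing $\pi^{-1}([\gra])$ as $c\,\ker(\pi)$ for a fixed $c\in\Cl^+(O_F)$ gives $(\Nr(I_i),K/F)=(c,K/F)+(k_i,K/F)$ in $\zmod{2}$, where $k_1,\dots,k_r$ is an enumeration of $\ker(\pi)$; by the previous paragraph the value $(k_i,K/F)$ is $0$ for exactly $r/2$ indices and $1$ for exactly $r/2$ indices, so the same is true of $(\Nr(I_i),K/F)$ (adding the constant $(c,K/F)$ just swaps the two buckets when $(c,K/F)=1$). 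Now formula \eqref{eq:13} reads $\Delta(B,\calO_l(I_i))\equiv(\Nr(I_i),K/F)+\Delta(B,\calO)\pmod 2$, with $\Delta(B,\calO_l(I_i))\in\{0,1\}$ and $\Delta(B,\calO)$ independent of $i$; hence exactly $r/2$ of the integers $\Delta(B,\calO_l(I_i))$ equal $1$ and the remaining $r/2$ equal $0$, so $\sum_{i=1}^r\Delta(B,\calO_l(I_i))=r/2$, as claimed.

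I expect the main obstacle to be purely conceptual rather than computational: it is the input that $K$ is linearly disjoint from $H$ over $F$, i.e.\ that the restricted Artin map on $\ker(\pi)$ is surjective, which is exactly the analogue of the mechanism behind $h(F)\mid h(K)$ for CM-extensions recalled in the introduction. Once that is in place the argument is elementary, and the only points needing care are that $\Psi$ is genuinely surjective (so the $r$ fibers of $\Psi$ lying over $\pi^{-1}([\gra])$ correspond bijectively to the $r$ narrow ideal classes in the coset $\pi^{-1}([\gra])$) and that the addition in \eqref{eq:13} is performed in $\zmod{2}$ whereas the sum in the statement of the lemma is an ordinary sum of the integers $0$ and $1$.
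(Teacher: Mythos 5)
Your proposal is correct and takes essentially the same route as the paper: both reduce the lemma via \eqref{eq:13} and the Artin reciprocity diagram to the fact that the CM field $K$ is linearly disjoint from the totally real Hilbert class field $H$ over $F$, so that the values $(\Nr(I_i),K/F)$ split evenly over the coset $\pi^{-1}([\gra])$. The only cosmetic difference is that you phrase the key input as surjectivity of the restriction $\ker(\pi)\cong\Gal(H^+/H)\to\Gal(K/F)$, whereas the paper counts the fiber of $\Gal(H^+/F)\to\Gal(HK/F)\simeq\Gal(H/F)\times\Gal(K/F)$; these are the same argument.
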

  \begin{proof}
Put $K=\Frac(B)$ and identify $\Gal(K/F)$ with 
$\zmod{2}$. From (\ref{eq:13}), we get
    \begin{equation}\label{eq:16}
      \sum_{i=1}^r\Delta(B, \calO_l(I_i))=\abs{\{1\leq i\leq r:
        (\Nr(I_i), K/F)=1-\Delta(B, \calO)\}}.   
    \end{equation}
As mentioned previously,  $K$ is contained in the narrow Hilbert class
field $H^+$ of $F$. 
If we put  $\sigma_i=(\Nr(I_i), H^+/F)\in \Gal(H^+/F)$ for each
    $1\leq i\leq r$, then $(\Nr(I_i),
K/F)=\sigma_i|_K$, the restriction of $\sigma_i$ to $K$. 

%     For simplicity, put $\varepsilon:=1-\Delta(B, \calO)\in \{0,
%     1\}$, and 
% Since $K\subseteq H^+$, we have  

On the other hand,    by the choices of $[I_i]\in \Cl(\calO)$, we
have
\begin{equation}
  \label{eq:14}
\pi^{-1}([\gra])=\{[\Nr(I_1)]^+, \cdots, [\Nr(I_r)]^+\}\subseteq \Cl^+(O_F).   
\end{equation}
In light of the following commutative diagram of Artin isomorphisms
\[
  \begin{CD}
    \Cl^+(O_F)@>{\simeq}>> \Gal(H^+/F)\\
    @V{\pi}VV  @VV{\text{restriction}}V\\
    \Cl(O_F) @>{\simeq}>> \Gal(H/F),
  \end{CD}
\]
the set $\{\sigma_1, \cdots, \sigma_r\}$ coincides with 
$  \{\sigma \in \Gal(H^+/F):
  \sigma|_H=(\gra, H/F)\}$. 
Combining this with
(\ref{eq:16}), we obtain
\begin{equation*}\label{eq:18}
    \sum_{i=1}^r\Delta(B, \calO_l(I_i))=\abs{\{\sigma\in
      \Gal(H^+/F): \sigma|_H=(\gra, H/F) \text{ and } 
        \sigma|_K=1-\Delta(B, \calO)\}}.    
\end{equation*}
Both of the restriction maps $\Gal(H^+/F)\twoheadrightarrow \Gal(H/F)$ and
$\Gal(H^+/F)\twoheadrightarrow \Gal(K/F)$ factor through
$\Gal(H^+/F)\twoheadrightarrow \Gal(HK/F)$, where $HK$ denotes the compositum of $H$
and $K$ inside $H^+$.  Since the (totally real) Hilbert class field $H/F$ is linearly
disjoint over $F$ with the CM-extension $K/F$, there is a canonical 
isomorphism   
\[\Gal(HK/F)\simeq \Gal(H/F)\times \Gal(K/F). \]
It follows that
\[
  \begin{split}
\sum_{i=1}^r\Delta(B, \calO_l(I_i))&=
\abs{\ker(\Gal(H^+/F)\to \Gal(HK/F))}\\&=\frac{1}{2}\abs{\ker(\Gal(H^+/F)\to \Gal(H/F))}=\frac{r}{2}. \qedhere    
  \end{split}
\]
  \end{proof}

% \begin{equation}
%   \label{eq:15}
% \{\sigma_i\mid 1\leq i \leq r\}=\{\sigma\in
% \Gal(H^+/F)\mid \sigma|_H=(\gra, H/F)\}. 
%   \end{equation}

% Via the isomorphism 

% the set $\{\sigma_i\mid 1\leq i \leq r\}$ is identified with the 

\section*{Acknowledgments}
Xue is partially supported by the National Natural Science Foundation of China grant No.~12271410.

%%%%%%%%%%%%%%%%%%%%%% AuCTeX  Tips %%%%%%%%%%%%
% (C-c %) to remove comments
% (C-c t p) to swap between latex and pdflatex
%%%%%%%%%%%%%%%%%%%%%%%%%%%%%%%%%%%%%%%%%%%%%%%%
\bibliographystyle{hplain}
\bibliography{TeXBiB}
\end{document}